\theoremstyle{plain}
\newtheorem{thm}{Theorem}[section]
\newtheorem{lem}{Lemma}[section]
\theoremstyle{remark}
\newtheorem{rem}{Remark}[section]
\DeclareMathOperator{\td}{d}
\newcommand{\bell}{\textup{B}}
\numberwithin{equation}{section}
\begin{document}

\title[Two closed form for the Bernoulli polynomials]
{Two closed forms for the Bernoulli polynomials}

\author[F. Qi]{Feng Qi}
\address[Qi]{Institute of Mathematics, Henan Polytechnic University, Jiaozuo City, Henan Province, 454010, China; College of Mathematics, Inner Mongolia University for Nationalities, Tongliao City, Inner Mongolia Autonomous Region, 028043, China; Department of Mathematics, College of Science, Tianjin Polytechnic University, Tianjin City, 300387, China}
\email{\href{mailto: F. Qi <qifeng618@gmail.com>}{qifeng618@gmail.com}, \href{mailto: F. Qi <qifeng618@hotmail.com>}{qifeng618@hotmail.com}, \href{mailto: F. Qi <qifeng618@qq.com>}{qifeng618@qq.com}}
\urladdr{\url{https://qifeng618.wordpress.com}}

\author[R. J. Chapman]{Robin J. Chapman}
\address[Chapman]{Mathematics Research Institute, University of Exeter, United Kingdom}
\email{\href{mailto: R. J. Chapman <R.J.Chapman@exeter.ac.uk>}{R.J.Chapman@exeter.ac.uk}, \href{mailto: R. J. Chapman <rjc@maths.ex.ac.uk>}{rjc@maths.ex.ac.uk}}
\urladdr{\url{www.maths.ex.ac.uk/~rjc/rjc.html}}
\urladdr{\url{http://empslocal.ex.ac.uk/people/staff/rjchapma/rjc.html}}

\begin{abstract}
In the paper, the authors find two closed forms involving the Stirling numbers of the second kind and in terms of a determinant of combinatorial numbers for the Bernoulli polynomials and numbers.
\end{abstract}

\keywords{closed form; Bernoulli polynomial; Bernoulli number; Stirling numbers of the second kind; determinant}

\subjclass[2010]{Primary 11B68, Secondary 11B73, 26A06, 26A09, 26C05}

\thanks{This paper was typeset using \AmS-\LaTeX}

\maketitle

\section{Introduction}

It is common knowledge that the Bernoulli numbers and polynomials $B_{k}$ and $B_k(u)$ for $k\ge0$ satisfy $B_k(0)=B_k$ and may be generated respectively by
\begin{equation*}
\frac{z}{e^z-1}=\sum_{k=0}^\infty B_k\frac{z^k}{k!}=1-\frac{z}2+\sum_{k=1}^\infty B_{2k}\frac{z^{2k}}{(2k)!}, \quad |z|<2\pi
\end{equation*}
and
\begin{equation*}
\frac{ze^{uz}}{e^z-1}=\sum_{k=0}^{\infty}B_{k}(u) \frac{z^k}{k!},\quad |z|<2\pi.
\end{equation*}
Because the function $\frac{x}{e^x-1}-1+\frac{x}2$ is odd in $x\in\mathbb{R}$, all of the Bernoulli numbers $B_{2k+1}$ for $k\in\mathbb{N}$ equal $0$. It is clear that $B_0=1$ and $B_1=-\frac12$. The first few Bernoulli numbers $B_{2k}$ are
\begin{align*}
B_2&=\frac16, & B_4&=-\frac1{30}, & B_6&=\frac1{42}, & B_8&=-\frac1{30}, \\
B_{10}&=\frac5{66}, & B_{12}&=-\frac{691}{2730}, & B_{14}&=\frac76, & B_{16}&=-\frac{3617}{510}.
\end{align*}
The first few Bernoulli polynomials are
\begin{gather*}
B_0(u)=1, \quad B_1(u)=u-\frac12, \quad B_2(u)=u^2-u+\frac16,\\
B_3(u)=u^3-\frac32u^2+\frac12u, \quad B_4(u)=u^4-2u^3+u^2-\frac1{30}.
\end{gather*}
\par
In combinatorics, the Stirling numbers $S(n,k)$ of the second kind for $n\ge k\ge1$ may be computed and generated by
\begin{equation*}
S(n,k)=\frac1{k!}\sum_{\ell=1}^k(-1)^{k-\ell}\binom{k}{\ell}\ell^{n}\quad\text{and}\quad
\frac{(e^x-1)^k}{k!}=\sum_{n=k}^\infty S(n,k)\frac{x^n}{n!}
\end{equation*}
respectively. See~\cite[p.~206]{Comtet-Combinatorics-74}.
\par
It is easy to see that the generating function
\begin{equation}\label{Bern-Polyn-Int-Expr}
\frac{ze^{uz}}{e^z-1}=\biggl[\frac{e^{(1-u)z}-e^{-uz}}z\biggr]^{-1}
=\frac1{\int_{-u}^{1-u}e^{zt}\td t} =\frac1{\int_0^1 e^{z(t-u)}\td t}.
\end{equation}
This expression will play important role in this paper. For related information on the integral expression~\eqref{Bern-Polyn-Int-Expr}, please refer to~\cite{emv-log-convex-simple.tex, ijmest-bernoulli, Guo-Qi-Filomat-2011-May-12.tex, jmaa-ii-97, (b^x-a^x)/x} and plenty of references cited in the survey and expository article~\cite{Qi-Springer-2012-Srivastava.tex}.
\par
In mathematics, a closed form is a mathematical expression that can be evaluated in a finite number of operations. It may contain constants, variables, four arithmetic operations, and elementary functions, but usually no limit.
\par
The main aim of this paper is to find two closed forms for the Bernoulli polynomials and numbers $B_k(u)$ and $B_k$ for $k\in\mathbb{N}$.
\par
The main results may be summarized as the following theorems.

\begin{thm}\label{BP-Stirl-thm}
The Bernoulli polynomials $B_n(u)$ for $n\in\mathbb{N}$ may be expressed as
\begin{multline}\label{BP-Stirl-form}
B_n(u)=\sum_{k=1}^nk! \sum_{r+s=k}\sum_{\ell+m=n}(-1)^m\binom{n}{\ell} \frac{\ell!}{(\ell+r)!}\frac{m!}{(m+s)!}\Biggl[\sum_{i=0}^r\sum_{j=0}^s(-1)^{i+j}\\
\times\binom{\ell+r}{r-i}\binom{m+s}{s-j}S(\ell+i,i)S(m+j,j)\Biggr]u^{m+s}(1-u)^{\ell+r}.
\end{multline}
Consequently, the Bernoulli numbers $B_k$ for $k\in\mathbb{N}$ can be represented as
\begin{equation}\label{Bernoulli-Stirling-formula}
B_n=\sum_{i=1}^n(-1)^{i}\frac{\binom{n+1}{i+1}}{\binom{n+i}{i}} S(n+i,i).
\end{equation}
\end{thm}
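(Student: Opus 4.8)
The plan is to extract the Taylor coefficients of the reciprocal generating function recorded in~\eqref{Bern-Polyn-Int-Expr}. Put $h(z)=\dfrac{e^{(1-u)z}-e^{-uz}}{z}$, so that~\eqref{Bern-Polyn-Int-Expr} reads $\sum_{n\ge0}B_n(u)\frac{z^n}{n!}=\frac1{h(z)}$. Since $h(0)=(1-u)+u=1$, the series $h-1$ has zero constant term and one may expand, as a formal power series,
\[
\frac1{h}=\sum_{k\ge0}(-1)^k(h-1)^k .
\]
The decisive structural remark is that $h-1$ is the sum of a contribution coming from $e^{(1-u)z}$ and one coming from $e^{-uz}$: writing $\alpha=\frac{e^{(1-u)z}-1}{z}-(1-u)$ and $\beta=\frac{1-e^{-uz}}{z}-u$, both vanishing at $z=0$, one has $\alpha+\beta=h-1$. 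Hence, by the binomial theorem and $\binom{k}{r}r!\,s!=k!$,
\[
\frac1{h}=\sum_{k\ge0}(-1)^k(\alpha+\beta)^k=\sum_{k\ge0}k!\sum_{r+s=k}\frac{(-\alpha)^r}{r!}\,\frac{(-\beta)^s}{s!}.
\]
After taking the coefficient of $z^n$, the outer index will run only over $1\le k\le n$, because $\alpha$ and $\beta$ are each $O(z)$, so $\frac{(-\alpha)^r(-\beta)^s}{r!\,s!}=O(z^{r+s})$.

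The second step is to expand the two factors $\frac{(-\alpha)^r}{r!}$ and $\frac{(-\beta)^s}{s!}$ into explicit power series in $z$ whose coefficients are the bracketed Stirling sums in~\eqref{BP-Stirl-form}. For the first factor I would write $-\alpha=(1-u)-\frac{e^{(1-u)z}-1}{z}$, apply the binomial theorem, and evaluate each power $\bigl(\frac{e^{(1-u)z}-1}{z}\bigr)^i$ by the Stirling generating function $\frac{(e^x-1)^i}{i!}=\sum_{N\ge i}S(N,i)\frac{x^N}{N!}$ with $x=(1-u)z$. Reindexing by $N=\ell+i$ and collapsing $\frac1{(r-i)!\,(\ell+i)!}=\frac1{(\ell+r)!}\binom{\ell+r}{r-i}$ should yield
\[
\frac{(-\alpha)^r}{r!}=\sum_{\ell\ge0}\frac{(1-u)^{\ell+r}}{(\ell+r)!}\Biggl[\sum_{i=0}^r(-1)^i\binom{\ell+r}{r-i}S(\ell+i,i)\Biggr]z^\ell,
\]
the $(1-u)$-part of~\eqref{BP-Stirl-form}. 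The identical computation applied to $-\beta=u+\frac{e^{-uz}-1}{z}$, now with $x=-uz$, produces the companion series; here the factor $(-u)^{m+j}$ is the source of the sign $(-1)^m$ in~\eqref{BP-Stirl-form}. Forming the Cauchy product of the two series, extracting the coefficient of $z^n$, and using $n!=\binom{n}{\ell}\ell!\,m!$ for $\ell+m=n$ to turn the factorials into the prefactor $\binom{n}{\ell}\frac{\ell!}{(\ell+r)!}\frac{m!}{(m+s)!}$, will reproduce~\eqref{BP-Stirl-form} term by term.

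Finally,~\eqref{Bernoulli-Stirling-formula} follows by specializing $u=0$ in~\eqref{BP-Stirl-form}. Then $u^{m+s}=0$ unless $m=s=0$, which forces $\ell=n$, $r=k$, $j=0$, and leaves $S(0,0)=1$; the survivor is $B_n=\sum_{k=1}^nk!\,\frac{n!}{(n+k)!}\sum_{i=0}^k(-1)^i\binom{n+k}{k-i}S(n+i,i)$. Interchanging the $k$- and $i$-summations, so that for fixed $i$ the coefficient of $(-1)^iS(n+i,i)$ is $\sum_{k=i}^nk!\frac{n!}{(n+k)!}\binom{n+k}{k-i}$, and using $\frac{k!}{(k-i)!}=i!\binom{k}{i}$ with the hockey-stick identity $\sum_{k=i}^n\binom{k}{i}=\binom{n+1}{i+1}$, this coefficient collapses to $\frac{n!\,i!}{(n+i)!}\binom{n+1}{i+1}=\frac{\binom{n+1}{i+1}}{\binom{n+i}{i}}$, giving~\eqref{Bernoulli-Stirling-formula} (the term $i=0$ vanishes since $S(n,0)=0$ for $n\ge1$).

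The step I expect to be most delicate is the middle one: matching the two inner Stirling sums exactly, that is, correctly reindexing the double sums, recognizing the Stirling generating function inside the binomial expansion, and keeping careful track of the signs — in particular confirming that the sign $(-1)^m$ in~\eqref{BP-Stirl-form} is precisely what the expansion of $(-\beta)^s$ produces. Everything else reduces to the routine bookkeeping of Cauchy products and factorial identities.
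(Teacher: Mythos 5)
Your argument is correct, and I verified the computations: the expansion of $\frac{(-\alpha)^r}{r!}$ via $\frac{(e^x-1)^i}{i!}=\sum_{N\ge i}S(N,i)\frac{x^N}{N!}$ with $x=(1-u)z$ does give the claimed series, the factor $(-1)^m$ does come from $(-uz)^m$, the Cauchy product with $n!=\binom{n}{\ell}\ell!\,m!$ reproduces~\eqref{BP-Stirl-form} exactly, and the $u=0$ specialization together with the hockey-stick identity yields~\eqref{Bernoulli-Stirling-formula}. However, your route is genuinely different from the paper's in its technical machinery. The paper applies the Fa\`a di Bruno formula~\eqref{Bruno-Bell-Polynomial} to $f(y)=\frac1y$ and $g(x)=\int_0^1e^{x(t-u)}\,\td t$, then invokes three quoted lemmas on the Bell polynomials of the second kind: the addition formula~\eqref{Bell-times-eq} to split the argument into its $(1-u)$- and $(-u)$-parts, the homogeneity~\eqref{Bell(n-k)} to pull out the powers of $(1-u)$ and $-u$, and the special value~\eqref{B-S-frac-value} to introduce the Stirling numbers. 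Your proof follows the same skeleton --- the decomposition of $h-1$ into $\alpha+\beta$ is exactly the role played by~\eqref{Bell-times-eq}, and your Stirling generating-function computation is a direct proof of what~\eqref{B-S-frac-value} encodes --- but it replaces the Bell-polynomial apparatus by the geometric series $\frac1h=\sum_k(-1)^k(h-1)^k$ and the binomial theorem. What this buys is self-containedness: you need no external lemmas (two of the paper's three are cited from other sources without proof), and you actually carry out the passage from~\eqref{BP-Stirl-form} to~\eqref{Bernoulli-Stirling-formula}, which the paper dismisses in one sentence. What the paper's approach buys is reusability of the Bell-polynomial identities for other generating functions of the form $1/\int_0^1 e^{z\varphi(t)}\,\td t$. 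One small point worth making explicit in your write-up: the identity $\sum_{k\ge0}(-1)^k(h-1)^k$ is legitimate as a formal power series precisely because $h-1$ has zero constant term, so only finitely many terms contribute to any fixed coefficient of $z^n$; you use this implicitly when truncating the outer sum at $k=n$.
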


\begin{thm}\label{Bern-polyn-thm}
Under the conventions that $\binom{0}{0}=1$ and $\binom{p}{q}=0$ for $q>p\ge0$, the Bernoulli polynomials $B_k(u)$ for $k\in\mathbb{N}$ may be expressed as
\begin{equation}\label{Bern-Polyn-determ}
B_k(u)=(-1)^k\biggl|\frac1{\ell+1}\binom{\ell+1}{m} \bigl[(1-u)^{\ell-m+1}-(-u)^{\ell-m+1}\bigr]\biggr|_{1\le \ell\le k,0\le m\le k-1},
\end{equation}
where $|\cdot|_{1\le \ell\le k,0\le m\le k-1}$ denotes a $k\times k$ determinant.
Consequently, the Bernoulli numbers $B_k$ for $k\in\mathbb{N}$ can be represented as
\begin{equation}\label{Bern-No-determ}
B_k=(-1)^k\biggl|\frac1{\ell+1}\binom{\ell+1}{m}\biggr|_{1\le \ell\le k,0\le m\le k-1}.
\end{equation}
\end{thm}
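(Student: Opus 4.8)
The plan is to read a linear recurrence for $B_n(u)$ off the integral representation~\eqref{Bern-Polyn-Int-Expr} and then invert it by Cramer's rule. First I would expand the denominator in~\eqref{Bern-Polyn-Int-Expr} as a power series: setting
\begin{equation*}
h(z):=\int_0^1 e^{z(t-u)}\td t=\sum_{n=0}^\infty a_n\frac{z^n}{n!},\qquad a_n=\frac{(1-u)^{n+1}-(-u)^{n+1}}{n+1},
\end{equation*}
one has $a_0=1$, and by~\eqref{Bern-Polyn-Int-Expr} the identity $h(z)\sum_{k\ge0}B_k(u)\frac{z^k}{k!}=1$ holds.

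Comparing coefficients of $z^N$ then yields $B_0(u)=1$ together with
\begin{equation*}
\sum_{m=0}^N\binom{N}{m}a_{N-m}B_m(u)=0,\qquad N\ge1.
\end{equation*}
For a fixed $k$ this is a lower-triangular linear system in the unknowns $B_0(u),\dots,B_k(u)$ whose diagonal entries all equal $a_0=1$. Adjoining the equation $B_0(u)=1$ turns it into a $(k+1)\times(k+1)$ system $C\mathbf b=\mathbf e_0$ whose coefficient matrix $C$ has entries $\binom{\ell}{m}a_{\ell-m}$ (read as $0$ when $m>\ell$) and is lower triangular with $\det C=1$.

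Next I would apply Cramer's rule to solve for the last unknown $B_k(u)$. Replacing the final column of $C$ by $\mathbf e_0$ and expanding the resulting determinant along that column, whose only nonzero entry sits in the top row, produces a factor $(-1)^{k}$ times the $k\times k$ minor obtained by deleting the top row and the last column. That minor is precisely $\bigl|\binom{\ell}{m}a_{\ell-m}\bigr|_{1\le\ell\le k,\,0\le m\le k-1}$, so $B_k(u)=(-1)^k\bigl|\binom{\ell}{m}a_{\ell-m}\bigr|$.

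Finally I would recast the entries in the stated shape: from $\binom{\ell+1}{m}=\frac{\ell+1}{\ell+1-m}\binom{\ell}{m}$ one gets $\binom{\ell}{m}=\frac{\ell-m+1}{\ell+1}\binom{\ell+1}{m}$, whence $\binom{\ell}{m}a_{\ell-m}=\frac1{\ell+1}\binom{\ell+1}{m}\bigl[(1-u)^{\ell-m+1}-(-u)^{\ell-m+1}\bigr]$, which is~\eqref{Bern-Polyn-determ}; putting $u=0$ collapses each bracket to $1$ and gives~\eqref{Bern-No-determ}. I expect the only delicate point to be the sign and index bookkeeping in the Cramer's rule step, equivalently the invocation of the standard determinant formula for the reciprocal of a power series; the power-series expansion and the binomial simplification are routine.
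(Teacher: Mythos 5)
Your derivation of \eqref{Bern-Polyn-determ} is correct, and it is essentially the paper's second proof unwound: Lemma~\ref{Chapman-lem} is itself proved by exactly the matrix--inversion/Cramer's--rule device you use, and the only cosmetic difference is that you work with exponential generating-function coefficients throughout (so the binomial convolution $\sum_{m}\binom{N}{m}a_{N-m}B_m(u)=0$ and the matrix $\bigl(\binom{\ell}{m}a_{\ell-m}\bigr)$ appear at once), whereas the paper applies the lemma to ordinary Taylor coefficients and then rescales row $\ell$ by $\ell!$ and column $m$ by $1/m!$ to reach the same determinant. Your Cramer's-rule sign $(-1)^{0+k}$ and the identity $\binom{\ell}{m}=\frac{\ell-m+1}{\ell+1}\binom{\ell+1}{m}$ are both right, so \eqref{Bern-Polyn-determ} is fully established.

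The final step does not go through as stated. Setting $u=0$ does \emph{not} collapse every bracket to $1$: on the superdiagonal $m=\ell+1$ one has $\ell-m+1=0$, so the bracket equals $(1-u)^{0}-(-u)^{0}=0$ for every $u$, while $\frac1{\ell+1}\binom{\ell+1}{\ell+1}=\frac1{\ell+1}\neq0$. Equivalently, in your matrix $\bigl(\binom{\ell}{m}a_{\ell-m}\bigr)$ the entries with $m=\ell+1$ vanish (they carry the factor $\binom{\ell}{\ell+1}=0$), whereas the matrix printed in \eqref{Bern-No-determ} puts $\frac1{\ell+1}$ there. Such an entry first enters the minor at position $(\ell,m)=(1,2)$ when $k\ge3$, and the discrepancy is not harmless: for $k=3$ formula \eqref{Bern-No-determ} as printed reads
\begin{equation*}
B_3=(-1)^3\begin{vmatrix} \tfrac12 & 1 & \tfrac12\\ \tfrac13 & 1 & 1\\ \tfrac14 & 1 & \tfrac32 \end{vmatrix}=-\frac1{24}\ne 0=B_3,
\end{equation*}
while replacing the $(1,2)$ entry by $0$, as your own derivation actually produces, gives the correct value $0$. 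So the determinant in \eqref{Bern-No-determ} must be read with entries $\binom{\ell}{m}\frac1{\ell-m+1}$ under the stated convention $\binom{\ell}{m}=0$ for $m>\ell$ (equivalently $\frac1{\ell+1}\binom{\ell+1}{m}$ for $m\le\ell$ and $0$ for $m>\ell$). The paper's own proof makes the identical jump (``follows readily from taking $u=0$''), so this is as much an erratum for the statement as a gap in your write-up; but as written your last sentence asserts something false, and the deduction of \eqref{Bern-No-determ} from \eqref{Bern-Polyn-determ} needs this repair.
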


\begin{rem}
The formula~\eqref{Bernoulli-Stirling-formula} recovers the one appeared in~\cite[p.~48, (11)]{Gould-MAA-1972}, \cite[(6)]{Guo-Qi-JANT-Bernoulli.tex}, \cite[p.~59]{Jeong-Kim-Son-JNT-2005}, and~\cite[p.~140]{Shirai-Sato-JNT-2001}. For detailed infirmation, please refer to~\cite[Remark~4]{Guo-Qi-JANT-Bernoulli.tex}. Hence, our Theorems~\ref{BP-Stirl-thm} and~\ref{Bern-polyn-thm} generalize those corresponding results obtained in~\cite{Gould-MAA-1972, Guo-Qi-JANT-Bernoulli.tex, Jeong-Kim-Son-JNT-2005, Shirai-Sato-JNT-2001}.
\end{rem}

\section{Lemmas}

For proving the main results, we need the following notation and lemmas.
\par
In combinatorial mathematics, the Bell polynomials of the second kind $\bell_{n,k}$ are defined by
\begin{equation*}
\bell_{n,k}(x_1,x_2,\dotsc,x_{n-k+1})=\sum_{\substack{\ell_i\in\{0\}\cup\mathbb{N}\\ \sum_{i=1}^ni\ell_i=n\\
\sum_{i=1}^n\ell_i=k}}\frac{n!}{\prod_{i=1}^{n-k+1}\ell_i!} \prod_{i=1}^{n-k+1}\Bigl(\frac{x_i}{i!}\Bigr)^{\ell_i}
\end{equation*}
for $n\ge k\ge0$. See~\cite[p.~134, Theorem~A]{Comtet-Combinatorics-74}.

\begin{lem}[{\cite[Example~2.6]{ABCLM-arXiv-2960} and~\cite[p.~136, Eq.~{[3n]}]{Comtet-Combinatorics-74}}]
The Bell polynomials of the second kind $\bell_{n,k}$ meets
\begin{multline}\label{Bell-times-eq}
\bell_{n,k}(x_1+y_1,x_2+y_2,\dotsc,x_{n-k+1}+y_{n-k+1})\\
=\sum_{r+s=k}\sum_{\ell+m=n}\binom{n}{\ell}\bell_{\ell,r}(x_1,x_2,\dotsc,x_{\ell-r+1})
\bell_{m,s}(y_1,y_2,\dotsc,y_{m-s+1}).
\end{multline}
\end{lem}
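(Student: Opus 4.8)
The plan is to establish \eqref{Bell-times-eq} by passing to exponential generating functions, under which the substitution $x_i\mapsto x_i+y_i$ becomes the sum of two formal power series, and the right-hand side then emerges from a binomial expansion followed by a Cauchy product. I would begin from the standard generating function for the Bell polynomials of the second kind,
\begin{equation*}
\frac1{k!}\biggl(\sum_{i=1}^\infty x_i\frac{t^i}{i!}\biggr)^k =\sum_{n=k}^\infty \bell_{n,k}(x_1,x_2,\dotsc,x_{n-k+1})\frac{t^n}{n!},
\end{equation*}
which follows directly from the definition of $\bell_{n,k}$ recalled above. Writing $\Phi(t)=\sum_{i\ge1}x_i t^i/i!$ and $\Psi(t)=\sum_{i\ge1}y_i t^i/i!$, the left-hand side of \eqref{Bell-times-eq} is exactly the coefficient of $t^n/n!$ in the expansion of $\frac1{k!}[\Phi(t)+\Psi(t)]^k$.

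First I would apply the binomial theorem to $[\Phi+\Psi]^k$ and use $\frac1{k!}\binom{k}{r}=\frac1{r!\,s!}$ whenever $r+s=k$, so that
\begin{equation*}
\frac1{k!}[\Phi(t)+\Psi(t)]^k =\sum_{r+s=k}\frac1{r!}\Phi(t)^r\cdot\frac1{s!}\Psi(t)^s.
\end{equation*}
By the generating function above, each factor $\frac1{r!}\Phi(t)^r$ equals $\sum_{\ell\ge r}\bell_{\ell,r}(x_1,\dotsc,x_{\ell-r+1})t^\ell/\ell!$, and similarly for the $\Psi$ factor, the point being that $\bell_{\ell,r}$ involves only $x_1,\dotsc,x_{\ell-r+1}$. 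Next I would multiply these two exponential generating functions by the Cauchy product rule, recalling that for series normalized by $t^n/n!$ the coefficient of $t^n/n!$ in $\bigl(\sum_\ell a_\ell t^\ell/\ell!\bigr)\bigl(\sum_m b_m t^m/m!\bigr)$ is $\sum_{\ell+m=n}\binom{n}{\ell}a_\ell b_m$.

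Carrying this out, the coefficient of $t^n/n!$ in $\frac1{k!}[\Phi+\Psi]^k$ becomes
\begin{equation*}
\sum_{r+s=k}\sum_{\ell+m=n}\binom{n}{\ell}\bell_{\ell,r}(x_1,\dotsc,x_{\ell-r+1})\bell_{m,s}(y_1,\dotsc,y_{m-s+1}),
\end{equation*}
which is precisely the right-hand side of \eqref{Bell-times-eq}; equating the two expressions for this coefficient then finishes the proof. The whole argument lives in the ring of formal power series, so no questions of convergence arise. The only step demanding genuine care is tracking the binomial weight $\binom{n}{\ell}$: it is an artifact of the exponential (rather than ordinary) normalization of the generating functions and must be produced correctly by the Cauchy product. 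I expect this bookkeeping, rather than any conceptual difficulty, to be the main thing to get right.
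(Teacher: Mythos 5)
Your proof is correct. The paper itself offers no argument for this lemma---it is quoted verbatim from the cited sources (Comtet, p.~136, Eq.~[3n], and Aboud et al., Example~2.6)---and the exponential-generating-function computation you give is exactly the standard derivation found there: substitute $x_i+y_i$ into $\frac1{k!}\bigl(\sum_{i\ge1}x_it^i/i!\bigr)^k$, expand by the binomial theorem using $\frac1{k!}\binom{k}{r}=\frac1{r!\,s!}$, and extract the coefficient of $t^n/n!$ via the exponential Cauchy product, which is where the weight $\binom{n}{\ell}$ arises. The bookkeeping you flag is handled correctly, and the boundary terms $r=0$ or $s=0$ (respectively $\ell<r$ or $m<s$) take care of themselves because $\frac1{0!}\Phi(t)^0=1$ encodes $\bell_{0,0}=1$ and $\bell_{\ell,0}=0$ for $\ell\ge1$, while $\bell_{\ell,r}=0$ for $\ell<r$. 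Nothing is missing.
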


\begin{lem}[{\cite[p.~135]{Comtet-Combinatorics-74}}]
For $n\ge k\ge0$, we have
\begin{equation}\label{Bell(n-k)}
\bell_{n,k}\bigl(abx_1,ab^2x_2,\dotsc,ab^{n-k+1}x_{n-k+1}\bigr) =a^kb^n\bell_{n,k}(x_1,x_n,\dotsc,x_{n-k+1}),
\end{equation}
where $a$ and $b$ are any complex numbers.
\end{lem}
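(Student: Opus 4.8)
The plan is to prove the scaling identity \eqref{Bell(n-k)} by substituting the rescaled arguments directly into the defining sum for $\bell_{n,k}$ and then extracting the dependence on $a$ and $b$. Since the Bell polynomials of the second kind are doubly graded—homogeneous of degree $k$ in the "number of parts" and of degree $n$ in the "weight"—the scaling $x_i\mapsto ab^ix_i$ should simply contribute a constant factor $a^kb^n$ to every summand.

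First I would replace each $x_i$ by $ab^ix_i$ in the definition, so that the $i$-th factor of the inner product transforms as
\begin{equation*}
\Bigl(\frac{ab^ix_i}{i!}\Bigr)^{\ell_i}
=(ab^i)^{\ell_i}\Bigl(\frac{x_i}{i!}\Bigr)^{\ell_i}
=a^{\ell_i}b^{i\ell_i}\Bigl(\frac{x_i}{i!}\Bigr)^{\ell_i}.
\end{equation*}
Multiplying these contributions over $1\le i\le n-k+1$ then splits off the scalar factor $a^{\sum_i\ell_i}b^{\sum_ii\ell_i}$ from the original product. The key step is to invoke the two constraints attached to the summation index set, namely $\sum_{i=1}^n\ell_i=k$ and $\sum_{i=1}^ni\ell_i=n$, which force this scalar factor to equal $a^kb^n$ uniformly across all admissible tuples $(\ell_1,\ell_2,\dotsc)$.

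Because the prefactor $a^kb^n$ is the same for every term in the sum, it may be pulled outside the summation. What remains under the sum is precisely the defining expression for $\bell_{n,k}(x_1,x_2,\dotsc,x_{n-k+1})$, which yields the claimed formula. I expect no genuine obstacle here: the entire content is that the two homogeneity gradings combine multiplicatively. The only point demanding care is verifying that both exponent constraints hold simultaneously on each summand—this is exactly the structural feature of the index set that makes the extracted factor a constant rather than a term-dependent quantity.
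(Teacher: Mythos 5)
Your proof is correct and complete. The paper states this lemma without proof, merely citing Comtet; your direct substitution into the defining sum, extracting the factor $a^{\sum_i\ell_i}b^{\sum_i i\ell_i}=a^kb^n$ from each summand via the two index constraints $\sum_i\ell_i=k$ and $\sum_i i\ell_i=n$, is exactly the standard homogeneity argument that the cited identity rests on, so nothing is missing.
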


\begin{lem}[\cite{Guo-Qi-JANT-Bernoulli.tex, Zhang-Yang-Oxford-Taiwan-12}]
For $n\ge k\ge1$, we have
\begin{equation}\label{B-S-frac-value}
\bell_{n,k}\biggl(\frac12, \frac13,\dotsc,\frac1{n-k+2}\biggr)
=\frac{n!}{(n+k)!}\sum_{i=0}^k(-1)^{k-i}\binom{n+k}{k-i}S(n+i,i).
\end{equation}
\end{lem}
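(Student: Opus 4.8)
The plan is to prove~\eqref{B-S-frac-value} by passing to exponential generating functions. Recall the standard characterization of the Bell polynomials of the second kind (see~\cite{Comtet-Combinatorics-74}): if $\varphi(t)=\sum_{i=1}^{\infty}x_i\frac{t^i}{i!}$, then
\begin{equation*}
\frac{\varphi(t)^k}{k!}=\sum_{n=k}^{\infty}\bell_{n,k}(x_1,x_2,\dotsc)\frac{t^n}{n!}.
\end{equation*}
First I would specialize this to the arguments $x_i=\frac1{i+1}$ appearing in~\eqref{B-S-frac-value}. Summing the resulting series and using $\sum_{i=0}^{\infty}\frac{t^i}{(i+1)!}=\frac{e^t-1}{t}$, one finds the simple closed form
\begin{equation*}
\varphi(t)=\sum_{i=1}^{\infty}\frac1{i+1}\frac{t^i}{i!}=\sum_{i=1}^{\infty}\frac{t^i}{(i+1)!}=\frac{e^t-1-t}{t}.
\end{equation*}
Hence $\bell_{n,k}\bigl(\frac12,\frac13,\dotsc\bigr)$ is exactly $n!$ times the coefficient of $t^n$ in $\frac1{k!}\bigl(\frac{e^t-1-t}{t}\bigr)^k$.

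Next I would extract this coefficient. Writing $\bigl(\frac{e^t-1-t}{t}\bigr)^k=t^{-k}(e^t-1-t)^k$ and expanding by the binomial theorem gives
\begin{equation*}
(e^t-1-t)^k=\sum_{i=0}^k(-1)^{k-i}\binom{k}{i}t^{k-i}(e^t-1)^i.
\end{equation*}
Substituting the Stirling generating function $\frac{(e^t-1)^i}{i!}=\sum_{m\ge i}S(m,i)\frac{t^m}{m!}$, multiplying through by $t^{-k}$, and then reading off the coefficient of $\frac{t^n}{n!}$ (which forces the index shift $m=n+i$) yields
\begin{equation*}
\bell_{n,k}\biggl(\frac12,\frac13,\dotsc\biggr)=\frac{n!}{k!}\sum_{i=0}^k(-1)^{k-i}\binom{k}{i}\frac{i!}{(n+i)!}S(n+i,i).
\end{equation*}

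It then remains only to rearrange the factorials into the asserted binomial form. Using $\frac1{k!}\binom{k}{i}i!=\frac1{(k-i)!}$ together with $\frac{n!}{(n+k)!}\binom{n+k}{k-i}=\frac{n!}{(k-i)!\,(n+i)!}$, both the displayed sum and the right-hand side of~\eqref{B-S-frac-value} reduce to the common expression $n!\sum_{i=0}^k\frac{(-1)^{k-i}}{(k-i)!}\frac{S(n+i,i)}{(n+i)!}$, completing the argument. The whole computation is essentially routine once the generating function $\frac{e^t-1-t}{t}$ has been identified; the only points requiring care are the bookkeeping between the factor $t^{k-i}$ and the shift $m=n+i$ in the Stirling generating function, and the harmless observation that the $i=0$ term drops out because $S(n,0)=0$ for $n\ge k\ge1$.
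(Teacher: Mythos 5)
The paper states this lemma without proof, citing \cite{Guo-Qi-JANT-Bernoulli.tex, Zhang-Yang-Oxford-Taiwan-12}, so there is no internal argument to compare against; your generating-function derivation is a correct, self-contained proof. The key identifications all check out: with $x_i=\frac1{i+1}$ the series $\varphi(t)=\frac{e^t-1-t}{t}$ is right, the binomial expansion of $(e^t-1-t)^k$ and the coefficient extraction via $m=n+i$ are handled correctly, and the final factorial rearrangement $\frac{n!}{k!}\binom{k}{i}\frac{i!}{(n+i)!}=\frac{n!}{(n+k)!}\binom{n+k}{k-i}$ reconciles your expression with the stated right-hand side. This is essentially the standard route one would expect the cited sources to take, and it makes the paper more self-contained than the bare citation does.
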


\begin{rem}
The special values of the Bell polynomials of the second kind $\bell_{n,k}$ are important in combinatorics and number theory. Recently, some special values for $\bell_{n,k}$ were discovered and applied in~\cite{Special-Bell2Euler.tex, Tan-Der-App-Thanks.tex, Deriv-Arcs-Cos.tex, Zhang-Yang-Oxford-Taiwan-12}.
\end{rem}

\begin{lem}\label{Chapman-lem}
Let $f(t)=1+\sum_{k=1}^\infty a_kt^k$ and $g(t)=1+\sum_{k=1}^\infty b_kt^k$ be formal power series such that $f(t)g(t)=1$. Then
\begin{equation*}
b_n=(-1)^n
\begin{vmatrix}
a_1 & 1 & 0 & 0 & \dotsm & 0\\
a_2 & a_1 & 1 & 0 & \dotsm & 0\\
a_3 & a_2 & a_1 & 1 & \dotsm & 0\\
\vdots & \vdots & \vdots & \vdots & \ddots & \vdots\\
a_{n-1} & a_{n-2} & a_{n-3} & a_{n-4} & \dotsm & 1\\
a_{n} & a_{n-1} & a_{n-2} & a_{n-3} & \dotsm & a_1
\end{vmatrix}.
\end{equation*}
\end{lem}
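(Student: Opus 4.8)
The plan is to reduce the determinant identity to the defining convolution recurrence for the reciprocal of a power series. Comparing coefficients of $t^n$ on both sides of $f(t)g(t)=1$, with the convention $a_0=b_0=1$, yields $\sum_{i=0}^n a_ib_{n-i}=0$ for every $n\ge1$, that is,
\begin{equation*}
b_n=-\sum_{i=1}^n a_ib_{n-i},\qquad b_0=1.
\end{equation*}
It therefore suffices to prove that the proposed closed form satisfies this same recurrence and the same initial value, since the recurrence determines the sequence $(b_n)$ uniquely. I write $D_n$ for the displayed $n\times n$ Hessenberg determinant, adopt the conventions $a_0=1$ and $D_0=1$, and phrase the claim as $b_n=(-1)^nD_n$.

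The argument proceeds by induction on $n$, the base case $n=1$ being immediate from $D_1=a_1$ and $b_1=-a_1$. For the inductive step I expand $D_n$ along its last row $(a_n,a_{n-1},\dotsc,a_1)$. The crucial observation is structural: because the matrix is lower Hessenberg, every entry strictly above the superdiagonal vanishes, so the minor obtained by deleting the last row and the $k$th column is block lower triangular. Its top-left block is exactly the leading Hessenberg matrix of order $k-1$ (whose determinant is $D_{k-1}$), its top-right block is identically zero, and its bottom-right block is itself lower triangular with the superdiagonal ones sitting on the diagonal, hence has determinant $1$. Consequently each such minor equals $D_{k-1}$, and the cofactor expansion collapses to
\begin{equation*}
D_n=\sum_{k=1}^n(-1)^{n+k}a_{n-k+1}D_{k-1}.
\end{equation*}

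Re-indexing by $i=n-k+1$ turns this into $D_n=\sum_{i=1}^n(-1)^{i+1}a_iD_{n-i}$. Substituting the induction hypothesis $D_{n-i}=(-1)^{n-i}b_{n-i}$ and simplifying the signs gives $(-1)^nD_n=-\sum_{i=1}^n a_ib_{n-i}=b_n$, which closes the induction. I expect the only delicate point to be the block decomposition of the minors together with the attendant index bookkeeping—specifically, verifying that the auxiliary bottom-right block really is triangular with unit diagonal, which is precisely what forces every cofactor to reduce to a smaller instance $D_{k-1}$ of the same determinant, and confirming that the convention $a_0=1$ is consistent on the diagonal. An essentially equivalent route bypasses the induction altogether: the relations $\sum_{i=0}^n a_ib_{n-i}=0$ constitute a unit lower-triangular linear system for $b_1,\dotsc,b_n$, so Cramer's rule expresses $b_n$ as the determinant obtained by replacing the last column by the right-hand side, and a reflection of rows and columns together with extraction of the factor $(-1)^n$ recovers the stated Hessenberg form.
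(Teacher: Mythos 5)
Your argument is correct, but your primary route is genuinely different from the paper's. The paper notes that $f(t)g(t)=1$ forces the unit lower-triangular Toeplitz matrix built from the $b_k$ to be the inverse of the one built from the $a_k$, and then simply invokes Cramer's rule --- essentially the alternative you sketch in your closing sentence, where $b_n$ appears as a quotient of determinants with unit denominator and the numerator is reflected and resigned into the stated Hessenberg form. Your main proof instead verifies directly, by induction, that the Hessenberg determinant satisfies the convolution recurrence $b_n=-\sum_{i=1}^n a_ib_{n-i}$ which uniquely determines $g=1/f$; the key step, the block lower-triangular decomposition of each last-row minor giving $M_{n,k}=D_{k-1}$ (top-right block zero because the matrix vanishes above the superdiagonal, bottom-right block unit lower triangular), is carried out correctly, and the sign bookkeeping $(-1)^{n+k}=(-1)^{i+1}$ under $i=n-k+1$ checks out. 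What your induction buys is a fully self-contained, elementary verification: the paper's ``applying Cramer's rule'' is a one-line gesture that leaves to the reader precisely the row/column reflection and sign extraction you mention, whereas your cofactor expansion makes every step explicit. What the paper's route buys is brevity and a conceptual explanation of where the determinant comes from (it is, up to sign and reflection, an entry of an inverse matrix). One small point of hygiene: state explicitly that your induction is a strong one, since the expansion of $D_n$ uses all of $D_0,\dotsc,D_{n-1}$, not just $D_{n-1}$.
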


\begin{proof}
The identity $f(t)g(t)=1$ entails the matrix identity
\begin{equation*}
\begin{pmatrix}
1 & 0 & 0 & \dotsm & 0\\
b_1 & 1 & 0 & \dotsm & 0\\
b_2 & b_1 & 1 & \dotsm & 0\\
\vdots & \vdots & \vdots & \ddots & \vdots\\
b_n & b_{n-1} & b_{n-2} & \dotsm & 1
\end{pmatrix}
=
\begin{pmatrix}
1 & 0 & 0 & \dotsm & 0\\
a_1 & 1 & 0 & \dotsm & 0\\
a_2 & a_1 & 1 & \dotsm & 0\\
\vdots & \vdots & \vdots & \ddots & \vdots\\
a_n & a_{n-1} & a_{n-2} & \dotsm & 1
\end{pmatrix}^{-1},
\end{equation*}
where $\begin{pmatrix}\cdot\end{pmatrix}^{-1}$ stands for the inverse of an invertible matrix $\begin{pmatrix}\cdot\end{pmatrix}$. Applying Cramer's rule for a system of linear equations proves Lemma~\ref{Chapman-lem}.
\end{proof}

\begin{rem}
The idea of Lemma~\ref{Chapman-lem} was used in~\cite[pp.~22--23]{Macdonald-2nd} to express determinants of complete symmetric functions in terms of determinants of elementary symmetric functions.
\end{rem}

\section{Proofs of Theorems~\ref{BP-Stirl-thm} and~\ref{Bern-polyn-thm}}

We are now in a position to prove our main results.

\begin{proof}[Proof of Theorem~\ref{BP-Stirl-thm}]
In terms of the Bell polynomials of the second kind $\bell_{n,k}$, the Fa\`a di Bruno formula for computing higher order derivatives of composite functions is described in~\cite[p.~139, Theorem~C]{Comtet-Combinatorics-74} by
\begin{equation}\label{Bruno-Bell-Polynomial}
\frac{\td^n}{\td x^n}f\circ g(x)=\sum_{k=1}^nf^{(k)}(g(x)) \bell_{n,k}\bigl(g'(x),g''(x),\dotsc,g^{(n-k+1)}(x)\bigr).
\end{equation}
By the integral expression~\eqref{Bern-Polyn-Int-Expr}, applying the formula~\eqref{Bruno-Bell-Polynomial} to the functions $f(y)=\frac1y$ and $y=g(x)=\int_0^1 e^{x(t-u)}\td t$ results in
\begin{align*}
&\quad\frac{\td^n}{\td x^n}\biggl(\frac{xe^{ux}}{e^x-1}\biggr)
=\frac{\td^n}{\td x^n}\Biggl(\frac1{\int_0^1 e^{x(t-u)}\td t}\Biggr)\\
&=\sum_{k=1}^n\frac{(-1)^kk!}{\bigl(\int_0^1 e^{x(t-u)}\td t\bigr)^{k+1}} \bell_{n,k}\biggl(\int_0^1(t-u)e^{x(t-u)}\td t, \\
&\quad\int_0^1 (t-u)^2e^{x(t-u)}\td t,\dotsc, \int_0^1 (t-u)^{n-k+1}e^{x(t-u)}\td t\biggr)\\
&\to \sum_{k=1}^n(-1)^kk! \bell_{n,k}\biggl(\int_0^1(t-u)\td t, \int_0^1(t-u)^2\td t,\dotsc,\int_0^1(t-u)^{n-k+1}\td t\biggr)\\
&=\sum_{k=1}^n(-1)^kk! \bell_{n,k}\biggl(\frac{(1-u)^2-(-u)^2}{2}, \frac{(1-u)^3-(-u)^3}{3},\\
&\quad\dotsc,\frac{(1-u)^{n-k+2}-(-u)^{n-k+2}}{n-k+2}\biggr)
\end{align*}
as $x\to0$. Further employing~\eqref{Bell-times-eq}, \eqref{Bell(n-k)}, and~\eqref{B-S-frac-value} acquires
\begin{multline*}
\begin{aligned}
\frac{\td^n}{\td x^n}\biggl(\frac{xe^{ux}}{e^x-1}\biggr)\bigg|_{x=0}
&=\sum_{k=1}^n(-1)^kk! \sum_{r+s=k}\sum_{\ell+m=n}\binom{n}{\ell}\\
&\quad\times\bell_{\ell,r}\biggl(\frac{(1-u)^2}{2}, \frac{(1-u)^3}{3}, \dotsc,\frac{(1-u)^{\ell-r+2}}{\ell-r+2}\biggr)\\
&\quad\times\bell_{m,s}\biggl(-\frac{(-u)^2}{2}, -\frac{(-u)^3}{3}, \dotsc,-\frac{(-u)^{m-s+2}}{m-s+2}\biggr)
\end{aligned}\\
\begin{aligned}
&=\sum_{k=1}^n(-1)^kk! \sum_{r+s=k}\sum_{\ell+m=n}\binom{n}{\ell}(1-u)^{\ell+r} \bell_{\ell,r}\biggl(\frac1{2}, \frac1{3}, \dotsc,\frac1{\ell-r+2}\biggr)\\
&\quad\times u^s(-u)^m\bell_{m,s}\biggl(\frac1{2}, \frac1{3}, \dotsc,\frac1{m-s+2}\biggr)
\end{aligned}\\
\begin{aligned}
&=\sum_{k=1}^nk! \sum_{r+s=k}\sum_{\ell+m=n}(-1)^m\binom{n}{\ell} \frac{\ell!}{(\ell+r)!}\frac{m!}{(m+s)!}\\
&\quad\times\Biggl[\sum_{i=0}^r\sum_{j=0}^s(-1)^{i+j}\binom{\ell+r}{r-i}
\binom{m+s}{s-j}S(\ell+i,i)S(m+j,j)\Biggr]u^{m+s}(1-u)^{\ell+r}.
\end{aligned}
\end{multline*}
As a result, the formula~\eqref{BP-Stirl-form} follows immediately.
\par
Letting $u=0$ in~\eqref{BP-Stirl-form}, simplifying, and interchanging the order of sums lead to the formula~\eqref{Bernoulli-Stirling-formula}.
The proof of Theorem~\ref{BP-Stirl-thm} is complete.
\end{proof}

\begin{proof}[The first proof of Theorem~\ref{Bern-polyn-thm}]
Let $u=u(z)$ and $v=v(z)\ne0$ be differentiable functions. In~\cite[p.~40]{Bourbaki-Spain-2004}, the formula
\begin{equation}\label{Sitnik-Bourbaki}
\frac{\td^k}{\td z^k}\biggl(\frac{u}{v}\biggr)
=\frac{(-1)^k}{v^{k+1}}
\begin{vmatrix}
u & v & 0 & \dots & 0\\
u' & v' & v & \dots & 0\\
u'' & v'' & 2v' & \dots & 0\\
\hdotsfor[2]{5}\\
u^{(k-1)} & v^{(k-1)} & \binom{k-1}1v^{(k-2)} &  \dots & v\\
u^{(k)} & v^{(k)} & \binom{k}1v^{(k-1)} & \dots & \binom{k}{k-1}v'
\end{vmatrix}
\end{equation}
for the $k$th derivative of the ratio $\frac{u(z)}{v(z)}$ was listed. For easy understanding and convenient availability, we now reformulate the formula~\eqref{Sitnik-Bourbaki} as
\begin{equation}\label{Sitnik-Bourbaki-reform}
\frac{\td^k}{\td z^k}\biggl(\frac{u}{v}\biggr)
=\frac{(-1)^k}{v^{k+1}}
\begin{vmatrix}
A_{(k+1)\times1}&B_{(k+1)\times k}
\end{vmatrix}_{(k+1)\times(k+1)},
\end{equation}
where the matrices
\begin{equation*}
A_{(k+1)\times1}=(a_{\ell,1})_{0\le \ell\le k}
\end{equation*}
and
\begin{equation*}
B_{(k+1)\times k}=(b_{\ell,m})_{0\le \ell\le k,0\le m\le k-1}
\end{equation*}
satisfy
\begin{equation*}
a_{\ell,1}=u^{(\ell)}(z)\quad \text{and}\quad b_{\ell,m}=\binom{\ell}{m}v^{(\ell-m)}(z)
\end{equation*}
under the conventions that $v^{(0)}(z)=v(z)$ and that $\binom{p}{q}=0$ and $v^{(p-q)}(z)\equiv0$ for $p<q$. See also~\cite[Section~2.2]{Tan-Der-App-Thanks.tex} and~\cite[Lemma~2.1]{Euler-No-3Sum.tex}.
By the integral expression~\eqref{Bern-Polyn-Int-Expr}, applying the formula~\eqref{Sitnik-Bourbaki-reform} to $u(z)=1$ and $v(z)=\int_0^1 e^{z(t-u)}\td t$ yields $a_{1,1}=1$, $a_{\ell,1}=0$ for $\ell>1$,
\begin{align*}
b_{\ell,m}&=\binom{\ell}{m}\int_0^1 (t-u)^{\ell-m}e^{z(t-u)}\td t\\
&\to \binom{\ell}{m}\int_0^1 (t-u)^{\ell-m}\td t, \quad z\to0\\
&=\binom{\ell}{m}\frac{(1-u)^{\ell-m+1}-(-u)^{\ell-m+1}}{\ell-m+1}
\end{align*}
for $0\le \ell\le k$ and $0\le m\le k-1$ with $\ell\ge m$, and
\begin{align*}
\frac{\td^k}{\td z^k}\biggl(\frac{ze^{uz}}{e^z-1}\biggr)
&=\frac{(-1)^k}{b_{0,0}^{k+1}} |b_{\ell,m}|_{1\le \ell\le k,0\le m\le k-1}\\
&\to(-1)^k \biggl|\binom{\ell}{m}\int_0^1 (t-u)^{\ell-m}\td t\biggr|_{1\le \ell\le k,0\le m\le k-1}, \quad z\to0\\
&=(-1)^k\biggl|\binom{\ell}{m}\frac{(1-u)^{\ell-m+1}-(-u)^{\ell-m+1}}{\ell-m+1}\biggr|_{1\le \ell\le k,0\le m\le k-1}.
\end{align*}
The formula~\eqref{Bern-Polyn-determ} is proved.
\par
The formula~\eqref{Bern-No-determ} follows readily from taking $u=0$ in~\eqref{Bern-Polyn-determ}. The first proof of Theorem~\ref{Bern-polyn-thm} is complete.
\end{proof}

\begin{proof}[The second proof of Theorem~\ref{Bern-polyn-thm}]
Applying Lemma~\ref{Chapman-lem} to the function $g(t)=\frac{te^{ut}}{e^t-1}$ and $f(t)=\frac{e^{(1-u)t}-e^{-ut}}{t}$ reveals that $b_n=\frac{B_n(u)}{n!}$ and $a_n=\frac{(1-u)^{n+1}-(-u)^{n+1}}{n!}$. Hence, by virtue of Lemma~\ref{Chapman-lem},
\begin{equation*}
B_n(u)=(-1)^nn!
\begin{vmatrix}
\frac{(1-u)^{\ell-m+1}-(-u)^{\ell-m+1}}{(\ell-m+1)!}
\end{vmatrix}_{1\le\ell\le n,0\le m\le n-1}.
\end{equation*}
Multiplying the row $\ell$ of this determinant by $\ell!$ and dividing the row $m$ by $m!$ gives
\begin{equation*}
B_n(u)=(-1)^n\biggl|\frac1{\ell+1}\binom{\ell+1}{m} \bigl[(1-u)^{\ell-m+1}-(-u)^{\ell-m+1}\bigr]\biggr|_{1\le \ell\le n,0\le m\le n-1}.
\end{equation*}
The formula~\eqref{Bern-Polyn-determ} is thus proved. The second proof of Theorem~\ref{Bern-polyn-thm} is complete.
\end{proof}

\begin{rem}
This manuscript is a revision and extension of the first version of the preprint~\cite{Exp-Bernoulli-Polynomials.tex}.
\end{rem}

\subsection*{Acknowledgements}
The first author thanks Professor J. F. Peters at University of Manitoba for his recommendation of the reference~\cite{ABCLM-arXiv-2960} on 26 May 2015 through the ResearchGate website.
\par
This manuscript was drafted while the first author visited Kyungpook National University and Hannam University between 25--31 May 2015 to attend the 2015 Conference and Special Seminar of the Jangjeon Mathematical Society. The first author appreciates Professors Dmitry V. Dolgy and Taekyun Kim at Kwangwoon University, Byung-Mun Kim at Dongguk University, Dae San Kim at Sogang University, Seog-Hoon Rim at Kyungpook National University, Jong Jin Seo at Pukyong National University, and many others for their invitation, support, and hospitality.

\end{document}